\newtheorem{fact}{Fact}
\newtheorem{corollary}{Corollary}
\begin{document}

%\begin{frontmatter}
\title{On the distance between non-isomorphic groups}
\author{
G\'abor Ivanyos\thanks{
Computer and Automation Research Institute of
the Hungarian Academy of Sciences,
Kende u.~13-17, H-1111 Budapest, Hungary.
E-mail: {\tt Gabor.Ivanyos@sztaki.hu}
}
\and 
Fran{\c c}ois Le Gall\thanks{
Department of Computer Science, The University of Tokyo,
7-3-1 Hongo, Bunkyo-ku, Tokyo 113-8656, Japan.
E-mail: {\tt legall@is.s.u-tokyo.ac.jp}
}
\and 
  Yuichi Yoshida\thanks{
%\address{
  School of Informatics, Kyoto University,
  and Preferred Infrastructure, Inc.,
  Yoshida-Honmachi, Kyoto 606-8501, Japan.
E-mail: {\tt yyoshida@kuis.kyoto-u.ac.jp}
}
}

\date{}
\maketitle

\begin{abstract}
A result of Ben-Or, Coppersmith, Luby and Rubinfeld on testing whether a map between
two groups is close to a homomorphism implies
a tight lower bound on the distance between the multiplication tables
of two non-isomorphic groups.
\end{abstract}
%\end{frontmatter}

In~\cite{Drapal92} Dr\'apal showed that if
$\circ$ and $\ast$ are two binary operations
on the finite set $G$ such that $(G,\circ)$
and $(G,\ast)$ are non-isomorphic groups
then the Hamming distance between the two 
multiplication tables is greater than $\frac{1}{9}|G|^2$.
In~\cite{Drapal03} 
infinite families of non-isomorphic pairs of 3-groups
with distance exactly $\frac{2}{9}|G|^2$ 
are given.

In this note we show that
$\frac{2}{9}|G|^2$ is a lower bound
for the distance of arbitrary non-isomorphic
group structures. The proof is a simple application
of the following result from \cite{Ben-Or+08}.

\begin{fact}\label{fact:Ben-Or}
  Let $(G,\circ)$ and $(K,\ast)$ be two groups 
and $f\colon G\to K$ be a map such that 
$$\frac{\#\left\{(x,y)\in G\times G:f(x\circ y)=f(x)\ast
f(y)\right\}}{|G|^2}>\frac{7}{9}.$$
  Then there exists  a group homomorphism $h\colon G\to K$ such that
   $\frac{\#\left\{x\in G:f(x)= h(x)\right\}}{|G|}\ge\frac{5}{9}\:$.
\end{fact}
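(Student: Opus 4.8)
The plan is to construct an explicit candidate homomorphism by a majority vote, show it agrees with $f$ on at least a $\tfrac59$ fraction of $G$, and finally verify it is a genuine homomorphism. Write $\delta=\Pr_{x,y}[f(x\circ y)\neq f(x)\ast f(y)]$, so the hypothesis reads $\delta<\tfrac29$ (all probabilities are over independent uniform choices from $G$). For each $x\in G$ I would define $g(x)\in K$ to be the most frequently occurring value of $f(x\circ z)\ast f(z)^{-1}$ as $z$ ranges uniformly over $G$, breaking ties arbitrarily; if $f$ were a homomorphism this expression would equal $f(x)$ for every $z$.

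The first step is to show that this vote is in fact a strict majority. I would lower bound the collision probability $\Pr_{z,w}[f(x\circ z)\ast f(z)^{-1}=f(x\circ w)\ast f(w)^{-1}]$ by invoking the test twice: setting $w=z\circ s$ for an independent uniform $s$, the events $f(x\circ z\circ s)=f(x\circ z)\ast f(s)$ and $f(z\circ s)=f(z)\ast f(s)$ each fail with probability at most $\delta$ (both argument pairs being uniform), and when both hold a short computation gives $f(x\circ z)\ast f(z)^{-1}=f(x\circ w)\ast f(w)^{-1}$. Hence this collision probability is at least $1-2\delta$, and since it is at most the largest atom $p_x:=\Pr_z[f(x\circ z)\ast f(z)^{-1}=g(x)]$, we obtain $p_x\geq 1-2\delta>\tfrac12$ using $\delta<\tfrac14$. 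Thus $g(x)$ is the unique value attained with probability exceeding $\tfrac12$.

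For the agreement bound I would note that $f(x\circ z)\ast f(z)^{-1}=f(x)$ holds precisely when the test passes at $(x,z)$; writing $\delta_x:=\Pr_z[f(x\circ z)\neq f(x)\ast f(z)]$, the value $f(x)$ is therefore attained with probability $1-\delta_x$. Whenever $\delta_x<\tfrac12$ this makes $f(x)$ the strict-majority value, forcing $g(x)=f(x)$. Since the average of $\delta_x$ over $x$ equals $\delta$, Markov's inequality yields $\Pr_x[\delta_x\geq\tfrac12]\leq 2\delta$, so $\Pr_x[g(x)=f(x)]\geq 1-2\delta\geq\tfrac59$, which is exactly the claimed agreement.

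The remaining and hardest step is to prove that $g$ is an honest homomorphism, i.e. $g(x\circ y)=g(x)\ast g(y)$ for all $x,y$. The naive route combines the three majority statements for $g(x\circ y)$, $g(x)$ and $g(y)$ (evaluated at $z$, $y\circ z$ and $z$ respectively) with one test invocation; a union bound then forces the identity only when $6\delta<1$, that is for $\delta<\tfrac16$. Pushing the threshold all the way to the tight value $\tfrac29$ is the real obstacle, and I expect it to require the rigidity phenomenon at the heart of \cite{Ben-Or+08}: the distribution of $f(x\circ z)\ast f(z)^{-1}$ is close to its own self-convolution, and a distribution close to its self-convolution must be close to the uniform distribution on a coset of a subgroup. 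This rigidity both pins down the heavy atom $g(x)$ and forces the multiplicativity relation to hold exactly once $\delta<\tfrac29$; establishing and applying it is the step I would budget the most effort for, and it is presumably where the matching Dr\'apal examples show that the constant $\tfrac79$ cannot be improved.
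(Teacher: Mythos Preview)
Your plan coincides with the paper's sketch: define $h(x)$ as the plurality value of $f(x\circ z)\ast f(z)^{-1}$, then derive the agreement bound and the homomorphism property. The paper organizes the difficulty differently, however: its declared ``first step'' is to show that $p_x>\tfrac23$ for \emph{every} $x$, after which both conclusions are genuinely easy. In particular, for the homomorphism you need only the three majority events you already list (at $z$, at $y\circ z$, at $z$); no extra test invocation is required, since associativity in $G$ makes $g(x)\ast g(y)=g(x\circ y)$ fall out by pure cancellation, and the union bound over three events succeeds as soon as $3(1-p_x)<1$. So the obstacle you place at the homomorphism step is really the gap between your collision bound $p_x\ge 1-2\delta>\tfrac59$ and the paper's asserted $p_x>\tfrac23$; bridging that gap is the substantive content of \cite{Ben-Or+08}, and your instinct that the self-convolution rigidity is the mechanism is right.
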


Fact~\ref{fact:Ben-Or} is a weak version of
Theorem~1 in \cite{Ben-Or+08}. Here is a brief sketch of its proof.
For every $x\in G$, $h(x)$ is defined as
the value taken most frequently  by the expression 
$f(x\circ y)\ast f(y)^{-1}$ where $y$ runs over $G$. Then the 
first step is showing that for every $x\in G$,
$\#\{y\in G:f(x\circ y)\ast f(y)^{-1}=h(x)\}>\frac{2}{3}|G|$. The
homomorphic property of $h$ and equality of $h(x)$ with $f(x)$ 
for $\frac{5}{9}$ of the possible elements $x$
follow from this
claim easily.

We apply Fact~\ref{fact:Ben-Or} to obtain a result
on the distance of multiplication tables of
groups of not necessarily equal size. It will be
convenient to state it in terms of a quantity
complementary to the distance. Let $(G,\circ)$ and $(K,\ast)$ 
be finite groups. We define
the overlap between $(G,\circ)$ and $(K,\ast)$ as
$$\max_{\gamma:G\hookrightarrow S,\kappa:K\hookrightarrow S}
\#\left\{(x,y)\in G\times G:
\exists(x',y')\in K\times K
\mbox{~s.t.~}
\begin{array}{c}
\gamma(x)=\kappa(x'),\\
\gamma(y)=\kappa(y'),\\
\gamma(x\circ y)=\kappa(x'\ast y')
\end{array}
\right\},
$$
where $S$ is any set with $|S|\geq \max(|G|, |K|)$.
\begin{corollary}\label{theorem}
If $|G|\leq |K|$ and $(G,\circ)$ is not isomorphic to a subgroup of
 $(K,\ast)$ then the overlap between $(G,\circ)$ and $(K,\ast)$ 
is at most $\frac{7}{9}|G|^2$.
\end{corollary}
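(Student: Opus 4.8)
The plan is to reduce the corollary to Fact~\ref{fact:Ben-Or} by converting a pair of embeddings that witnesses the overlap into a single map $G\to K$, and then showing that the homomorphism supplied by Fact~\ref{fact:Ben-Or} must be injective. Write $\nu$ for the overlap, and fix injections $\gamma\colon G\hookrightarrow S$ and $\kappa\colon K\hookrightarrow S$ (into some common set $S$) attaining $\nu$. Set $D=\{x\in G:\gamma(x)\in\kappa(K)\}$ and, on $D$, let $f_0=\kappa^{-1}\circ\gamma$; this partial map is injective because $\gamma$ and $\kappa$ are. Unwinding the definition of the overlap, a pair $(x,y)\in G\times G$ is counted exactly when $x$, $y$ and $x\circ y$ all lie in $D$ and $f_0(x\circ y)=f_0(x)\ast f_0(y)$ --- indeed the witnessing $(x',y')\in K\times K$ is then forced to be $(f_0(x),f_0(y))$. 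So $\nu=\#\{(x,y):x,y,x\circ y\in D,\ f_0(x\circ y)=f_0(x)\ast f_0(y)\}$.

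Next I would extend $f_0$ to a total, still injective, map $f\colon G\to K$. This is the only place the hypothesis $|G|\le|K|$ is used: $f_0$ is an injection of $D$ into $K$, and since $|G\setminus D|=|G|-|D|\le|K|-|D|=|K\setminus f_0(D)|$, the remaining values of $f$ can be chosen injectively. As $f$ extends $f_0$, every pair counted by the overlap satisfies $f(x\circ y)=f(x)\ast f(y)$ as well, so
$$\#\{(x,y)\in G\times G:f(x\circ y)=f(x)\ast f(y)\}\ \ge\ \nu.$$

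Now suppose, for contradiction, that $\nu>\frac{7}{9}|G|^2$. Then the count on the left exceeds $\frac{7}{9}|G|^2$, and Fact~\ref{fact:Ben-Or} produces a group homomorphism $h\colon G\to K$ that agrees with $f$ on a set $A$ with $|A|\ge\frac{5}{9}|G|$. The key point is that $h$ is then injective: since $f$ is injective, $h$ is injective on $A$; but if $\ker h$ were nontrivial, $h$ would be at least $2$-to-$1$, hence injective on no set of size larger than $|G|/|\ker h|\le|G|/2$, contradicting $|A|\ge\frac{5}{9}|G|>\frac{1}{2}|G|$. An injective homomorphism $h$ realises $(G,\circ)$ as the subgroup $h(G)$ of $(K,\ast)$, contradicting the hypothesis; therefore $\nu\le\frac{7}{9}|G|^2$.

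The routine part is the unwinding in the first paragraph. The step I expect to carry the real content is the injectivity argument for $h$: Fact~\ref{fact:Ben-Or} only guarantees \emph{some} homomorphism close to $f$, and it is precisely the numerical fact that the agreement threshold $\frac{5}{9}$ exceeds $\frac{1}{2}$ --- together with the injectivity of $f$, bought by $|G|\le|K|$ --- that forces this homomorphism to be an embedding.
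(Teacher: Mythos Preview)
Your proof is correct and essentially identical to the paper's: both define $\kappa^{-1}\circ\gamma$ on the set $D$ (the paper's $G_0$), extend it to an injection $G\to K$ using $|G|\le|K|$, apply Fact~\ref{fact:Ben-Or}, and deduce injectivity of the resulting homomorphism from the $\frac{5}{9}>\frac{1}{2}$ agreement with an injective map. You are more explicit than the paper about where the hypothesis $|G|\le|K|$ enters and about the kernel argument forcing injectivity, but the route is the same.
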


\begin{proof}
Assume that the
overlap is larger than $\frac{7}{9}|G|^2$. Then
there exist injections
$\gamma:G\hookrightarrow S,\kappa:K\hookrightarrow S$
such that the set
$$Z=\left\{(x,y)\in G\times G:
\exists(x',y')\in K\times K
\mbox{~s.t.~}
\begin{array}{c}
\gamma(x)=\kappa(x'),\\
\gamma(y)=\kappa(y'),\\
\gamma(x\circ y)=\kappa(x'\ast y')
\end{array}
\right\}$$
has cardinality larger than $\frac{7}{9}|G|^2$.
Put $$G_0=\{x\in G|\exists x'\in K \mbox{~such that~}
\gamma(x)=\kappa(x')\}.$$ Then $\kappa^{-1}\circ \gamma$
embeds $G_0$ into $K$ and it can be extended to an
injection $\phi: G\hookrightarrow K$. For $(x,y)\in Z$
we have $$\phi(x\circ y)=\kappa^{-1}(\gamma(x\circ y))=
\kappa^{-1}(\gamma(x))\ast \kappa^{-1}(\gamma(y))
=\phi(x)\ast \phi(y),$$ therefore, by Fact~\ref{fact:Ben-Or}, 
there exists a homomorphism
$\psi:G\rightarrow K$ such that
$$\#\{x\in G:\psi(x)\neq \phi(x)\}<\frac{4}{9}|G|.$$
This, together with the injectivity of $\phi$
implies the $\psi$ is injective as well and its image is
a subgroup of $(K,\ast)$ isomorphic to $(G,\circ)$.
\end{proof}

 \end{document}